\newtheorem{theorem}{Theorem}
\theoremstyle{plain}
\newtheorem{corollary}{Corollary}
\newtheorem{lemma}{Lemma}
\newtheorem{remark}{Remark}
\numberwithin{equation}{section}
\begin{document}
\title{Inequalities via $n$-times differentiable convex functions}
\author{Merve Avci Ardic}
\address{Adiyaman University, Faculty of Science and Arts, Department of
Mathematics, 02040, Adiyaman}
\email{mavci@adiyaman.edu.tr}
\subjclass{26D10; 26D15}
\keywords{Hermite-Hadamard inequality, convex and concave functions, H\"{o}%
lder inequality, power-mean inequality, Jensen integral inequality..}

\begin{abstract}
In this paper, we establish some integral ineuqalities for $n-$ times
differentiable convex functions.
\end{abstract}

\maketitle

\section{introduction}

A function $\ f:I\rightarrow 
\mathbb{R}
$ is said to be convex function on $I$ if the inequality%
\begin{equation*}
f(\alpha x+(1-\alpha )y)\leq \alpha f(x)+(1-\alpha )f(y)
\end{equation*}%
holds for all $x,y\in I$ and $\alpha \in \lbrack 0,1].$

In the literature, Hermite-Hadamard inequality is known one of the most
significant inequality for convex functions. This inequality is defined
below:

Let $f:I\subset 
\mathbb{R}
\rightarrow 
\mathbb{R}
$ be a convex function on an interval $I$ and $a,b\in I$ with $a<b.$ Then 
\begin{equation*}
f\left( \frac{a+b}{2}\right) \leq \frac{1}{b-a}\int_{a}^{b}f(x)dx\leq \frac{%
f(a)+f(b)}{2}.
\end{equation*}%
For some inequalities, generalizations and applications including convex
functions and Hermite-Hadamard inequality it is possible to refer to \cite{1}%
-\cite{6} and \cite{10}, \cite{11}, \cite{14} and \cite{15}.

In \cite{15}, Dragomir and Agarwal proved following inequalities which
contain the right hand side of Hermite-Hadamard inequality.

\begin{theorem}
\label{teo 1.1} Let $f:I^{\circ }\subset 
\mathbb{R}
\rightarrow 
\mathbb{R}
$ be a differentiable mapping on $I^{\circ },$ $a,b\in I^{\circ }$ with $a<b.
$ If $\left\vert f^{\prime }\right\vert $ is convex on $[a,b],$ then the
following inequality holds:%
\begin{equation*}
\left\vert \frac{f(a)+f(b)}{2}-\frac{1}{b-a}\int_{a}^{b}f(x)dx\right\vert
\leq \frac{(b-a)(\left\vert f^{\prime }(a)\right\vert +\left\vert f^{\prime
}(b)\right\vert )}{8}.
\end{equation*}
\end{theorem}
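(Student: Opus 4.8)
The plan is to reduce the left-hand side to a single integral over $[0,1]$ involving $f'$, and then exploit the convexity of $\left\vert f'\right\vert$ together with two elementary integral evaluations. First I would establish the integral identity
\[
\frac{f(a)+f(b)}{2}-\frac{1}{b-a}\int_{a}^{b}f(x)\,dx=\frac{b-a}{2}\int_{0}^{1}(1-2t)f^{\prime}(ta+(1-t)b)\,dt,
\]
which is the heart of the argument. I would prove it by integration by parts on the right-hand side: taking $u=1-2t$ and $dv=f^{\prime}(ta+(1-t)b)\,dt$, the antiderivative of the composition with respect to $t$ is $\frac{1}{a-b}f(ta+(1-t)b)$, so the boundary term at $t=0,1$ produces $\frac{f(a)+f(b)}{b-a}$, while the surviving integral, after the substitution $x=ta+(1-t)b$, yields the averaged integral of $f$ over $[a,b]$.

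With the identity in hand, I would pass to absolute values and apply the triangle inequality for integrals to obtain
\[
\left\vert \frac{f(a)+f(b)}{2}-\frac{1}{b-a}\int_{a}^{b}f(x)\,dx\right\vert \leq \frac{b-a}{2}\int_{0}^{1}\left\vert 1-2t\right\vert \,\left\vert f^{\prime}(ta+(1-t)b)\right\vert \,dt.
\]
Next I would invoke the convexity of $\left\vert f^{\prime}\right\vert$ on $[a,b]$ to bound $\left\vert f^{\prime}(ta+(1-t)b)\right\vert \leq t\left\vert f^{\prime}(a)\right\vert +(1-t)\left\vert f^{\prime}(b)\right\vert$, which splits the integral into two pieces weighted by $\left\vert f^{\prime}(a)\right\vert$ and $\left\vert f^{\prime}(b)\right\vert$.

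Finally I would evaluate the two elementary integrals $\int_{0}^{1}\left\vert 1-2t\right\vert t\,dt$ and $\int_{0}^{1}\left\vert 1-2t\right\vert (1-t)\,dt$. Splitting each at $t=\tfrac{1}{2}$, where $\left\vert 1-2t\right\vert$ changes its defining expression, and computing directly (or using the symmetry $t\mapsto 1-t$, which forces the two integrals to be equal with sum $\int_{0}^{1}\left\vert 1-2t\right\vert \,dt=\tfrac{1}{2}$), shows that both integrals equal $\tfrac{1}{4}$. Substituting gives the bound $\frac{b-a}{2}\cdot\frac{1}{4}\left(\left\vert f^{\prime}(a)\right\vert +\left\vert f^{\prime}(b)\right\vert \right)=\frac{(b-a)(\left\vert f^{\prime}(a)\right\vert +\left\vert f^{\prime}(b)\right\vert )}{8}$, which is exactly the claimed estimate.

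The main obstacle is establishing the integral identity correctly: the integration by parts must carry the factor $\frac{1}{a-b}$ coming from differentiating $f(ta+(1-t)b)$, and the concluding substitution must convert $\int_{0}^{1}f(ta+(1-t)b)\,dt$ into $\frac{1}{b-a}\int_{a}^{b}f(x)\,dx$ with the correct sign once the limits flip. Once this identity is verified, every remaining step is routine, and no hypothesis beyond differentiability of $f$ and convexity of $\left\vert f^{\prime}\right\vert$ is needed.
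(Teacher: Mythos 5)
Your proof is correct: the identity obtained by parts, the convexity bound $\left\vert f^{\prime }(ta+(1-t)b)\right\vert \leq t\left\vert f^{\prime }(a)\right\vert +(1-t)\left\vert f^{\prime }(b)\right\vert$, and the evaluations $\int_{0}^{1}\left\vert 1-2t\right\vert t\,dt=\int_{0}^{1}\left\vert 1-2t\right\vert (1-t)\,dt=\tfrac{1}{4}$ all check out and combine to give exactly the constant $\tfrac{1}{8}$. Be aware, however, that the paper never proves this theorem directly: it is quoted from Dragomir and Agarwal, and the paper instead recovers it as a degenerate case of its own machinery. Concretely, Lemma \ref{lem 1.1} supplies a kernel identity with $(x-t)^{n}$ for $n$-times differentiable functions and arbitrary $x\in \lbrack a,b]$; Theorem \ref{teo 2.1} bounds the remainder term using convexity of $\left\vert f^{(n)}\right\vert$, with the convex combination written as $t=\frac{b-t}{b-a}a+\frac{t-a}{b-a}b$ on $[a,b]$ rather than on $[0,1]$; Corollary \ref{co 1.2} takes $n=1$; and Remark \ref{rem 1.1} notes that $x=\frac{a+b}{2}$ yields precisely the statement at hand. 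Your identity with kernel $1-2t$ on $[0,1]$ is, up to the affine change of variables, exactly the $n=1$, $x=\frac{a+b}{2}$ instance of Lemma \ref{lem 1.1}, so the underlying mechanism (integration-by-parts identity, triangle inequality, convexity, moments of the kernel) is the same in both arguments; what differs is the level of generality. Your route is shorter and self-contained, which is ideal if the target is only this inequality; the paper's route requires heavier bookkeeping (the four split integrals such as $\int_{a}^{x}(x-t)^{n}(b-t)\,dt$) but delivers the full family of estimates for every order $n$ and every evaluation point $x$, of which this theorem is the simplest member.
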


\begin{theorem}
\label{teo 1.2} Let $f:I^{\circ }\subset 
\mathbb{R}
\rightarrow 
\mathbb{R}
$ be a differentiable mapping on $I^{\circ },$ $a,b\in I^{\circ }$ with $a<b,
$ and let $p>1.$ If the new mapping $\left\vert f^{\prime }\right\vert
^{p/(p-1)}$ is convex on $[a,b],$ then the following inequality holds:%
\begin{eqnarray*}
&&\left\vert \frac{f(a)+f(b)}{2}-\frac{1}{b-a}\int_{a}^{b}f(x)dx\right\vert 
\\
&\leq &\frac{b-a}{2(p+1)^{1/p}}\left[ \frac{\left\vert f^{\prime
}(a)\right\vert ^{p/(p-1)}+\left\vert f^{\prime }(b)\right\vert ^{p/(p-1)}}{2%
}\right] ^{(p-1)/p}.
\end{eqnarray*}
\end{theorem}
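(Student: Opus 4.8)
The plan is to reduce the left-hand side to a single integral and then estimate it with H\"older's inequality together with the convexity hypothesis. The starting point is the integral identity
\begin{equation*}
\frac{f(a)+f(b)}{2}-\frac{1}{b-a}\int_{a}^{b}f(x)\,dx=\frac{b-a}{2}\int_{0}^{1}(1-2t)\,f^{\prime }(ta+(1-t)b)\,dt,
\end{equation*}
which I would verify by integrating the right-hand side by parts (taking $u=1-2t$ and $dv=f^{\prime }(ta+(1-t)b)\,dt$) and then performing the substitution $x=ta+(1-t)b$ in the resulting integral of $f$. Taking absolute values and passing the modulus inside the integral gives
\begin{equation*}
\left\vert \frac{f(a)+f(b)}{2}-\frac{1}{b-a}\int_{a}^{b}f(x)\,dx\right\vert \leq \frac{b-a}{2}\int_{0}^{1}\left\vert 1-2t\right\vert \,\left\vert f^{\prime }(ta+(1-t)b)\right\vert \,dt.
\end{equation*}

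Next I would apply H\"older's inequality to the integral on the right with the conjugate exponents $p$ and $q=p/(p-1)$, splitting the integrand as $\left\vert 1-2t\right\vert \cdot \left\vert f^{\prime }(ta+(1-t)b)\right\vert $. This bounds the integral by the product of $\left( \int_{0}^{1}\left\vert 1-2t\right\vert ^{p}\,dt\right) ^{1/p}$ and $\left( \int_{0}^{1}\left\vert f^{\prime }(ta+(1-t)b)\right\vert ^{q}\,dt\right) ^{1/q}$. The first factor is a direct computation: by symmetry about $t=1/2$ followed by the substitution $u=1-2t$ one finds $\int_{0}^{1}\left\vert 1-2t\right\vert ^{p}\,dt=1/(p+1)$, so this factor equals $(p+1)^{-1/p}$.

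For the second factor I would invoke the convexity of $\left\vert f^{\prime }\right\vert ^{q}$: since $ta+(1-t)b$ is the convex combination of $a$ and $b$ with weights $t$ and $1-t$, convexity yields $\left\vert f^{\prime }(ta+(1-t)b)\right\vert ^{q}\leq t\left\vert f^{\prime }(a)\right\vert ^{q}+(1-t)\left\vert f^{\prime }(b)\right\vert ^{q}$, and integrating over $[0,1]$ produces $\left( \left\vert f^{\prime }(a)\right\vert ^{q}+\left\vert f^{\prime }(b)\right\vert ^{q}\right) /2$. Substituting $1/q=(p-1)/p$ and assembling the three pieces gives exactly the claimed bound. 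No step presents a serious obstacle; the only point requiring genuine care is the verification of the opening identity, where one must track the boundary terms arising from the integration by parts and the orientation of the limits under the change of variable $x=ta+(1-t)b$.
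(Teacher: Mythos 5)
Your proof is correct, and every computation in it checks out: the opening identity is the standard first-order kernel identity (verifiable by parts exactly as you indicate), the H\"older step with conjugate exponents $p$ and $q=p/(p-1)$ is applied properly, $\int_{0}^{1}\left\vert 1-2t\right\vert ^{p}dt=1/(p+1)$ is right, and the convexity estimate integrates to the midpoint average $\bigl(\left\vert f^{\prime }(a)\right\vert ^{q}+\left\vert f^{\prime }(b)\right\vert ^{q}\bigr)/2$, assembling to exactly the stated bound. It is worth noting, however, that this route differs from how the result is handled in the paper. The paper never proves Theorem \ref{teo 1.2} directly: it is quoted from Dragomir and Agarwal \cite{15}, and within the paper it is recovered (in Remark \ref{rem 1.2}) as the special case $n=1$, $x=\frac{a+b}{2}$ of Theorem \ref{teo 2.2}, whose proof rests on the $n$-th order integral identity of Lemma \ref{lem 1.1} (from \cite{13}) followed by H\"older's inequality and convexity of $\left\vert f^{(n)}\right\vert ^{q}$. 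Your argument is in fact the classical direct proof from \cite{15}: your identity with kernel $1-2t$ on $[0,1]$ is precisely what Lemma \ref{lem 1.1} becomes when $n=1$ and $x=\frac{a+b}{2}$, after rescaling $[a,b]$ to $[0,1]$. The structural skeleton (identity, then modulus, then H\"older, then convexity) is the same in both; what distinguishes them is the level of generality. Your proof is self-contained and elementary, needing no machinery beyond first derivatives; the paper's route buys the general statement for arbitrary $n$ and arbitrary $x\in \lbrack a,b]$, of which this theorem is one corner.
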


It is possible to find a number of manuscripts about $n-$ time
differentiability in the literature. For example, in \cite{6}, \cite{10} and 
\cite{11} the authors generalized Hermite-Hadamrd inequality for functions
whose derivatives of $n-$th order are $\left( \alpha ,m\right) -$convex, $s-$%
convex in the second sense and $m-$convex respectively. Pachpatte obtained
generalizations of Ostrowski and Trapezoid inequalities for $n-$times
differentiable functions in \cite{7}. Kechriniotis and Theodorou proved some
integral inequalities via $n-$times differentiable functions and gave some
applications for probability density function in \cite{9}. In \cite{12},
Cerone et.al proved some general inequalities of Ostrowski type and gave
applications for numerical integration and power series. In \cite{13},
Barnett and Dragomir proved Ostrowski type, perturbed type and trapezoid
type inequalities for $n-$times differentiable functions. Similarly, in \cite%
{8}, Sofo obtained general integral inequalities for $n-$times
differentiable functions.

The main aim of this paper is to establish some integral ineuqalities for
functions whose derivatives of $n-$th order are convex. 

To prove our main result we need the following lemma from \cite{13}.

\begin{lemma}
\label{lem 1.1} Let $f:[a,b]\rightarrow 
\mathbb{R}
$ be a mapping such that the derivative $f^{(n-1)}$ $(n\geq 1)$ is
absolutely continuous on $[a,b]$. Then for any $x\in \lbrack a,b]$ one has
the equality:%
\begin{eqnarray*}
\int_{a}^{b}f(t)dt &=&\sum_{k=0}^{n-1}\frac{1}{\left( k+1\right) !}\left[
\left( x-a\right) ^{k+1}f^{(k)}(a)+\left( -1\right) ^{k}\left( b-x\right)
^{k+1}f^{(k)}(b)\right]  \\
&&+\frac{1}{n!}\int_{a}^{b}\left( x-t\right) ^{n}f^{(n)}(t)dt.
\end{eqnarray*}
\end{lemma}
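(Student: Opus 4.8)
The plan is to prove the identity by induction on $n$, with a single integration by parts carried out at each stage. The only analytic input required is the hypothesis that $f^{(n-1)}$ is absolutely continuous: this guarantees that $f^{(n)}$ exists almost everywhere, is Lebesgue integrable on $[a,b]$, and that the integration-by-parts formula is valid for the relevant products, so every manipulation below is justified.

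For the base case $n=1$, I would integrate $\int_a^b (x-t)f'(t)\,dt$ by parts, taking $u=x-t$ and $dv=f'(t)\,dt$, so that $du=-dt$ and $v=f(t)$. The boundary term evaluates to $(x-b)f(b)-(x-a)f(a)$, while the remaining integral is exactly $\int_a^b f(t)\,dt$. Rearranging and using $-(x-b)=b-x$ recovers the claimed equality, where the factor $(-1)^{0}=1$ matches the $k=0$ summand. For the inductive step, I would assume the identity holds for some $n\geq 1$ and integrate only the remainder term $\frac{1}{n!}\int_a^b (x-t)^n f^{(n)}(t)\,dt$ by parts, with $u=f^{(n)}(t)$ and $dv=(x-t)^n\,dt$, so that $v=-\frac{(x-t)^{n+1}}{n+1}$. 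This splits the remainder into a boundary contribution plus $\frac{1}{(n+1)!}\int_a^b (x-t)^{n+1}f^{(n+1)}(t)\,dt$, the latter being precisely the new remainder term at order $n+1$. Substituting back into the inductive hypothesis extends the summation index from $k=0,\dots,n-1$ to $k=0,\dots,n$, which is the desired identity for $n+1$.

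I expect the only delicate point to be the sign bookkeeping in the boundary terms. Evaluating $v$ at $t=b$ introduces the factor $(x-b)^{n+1}$, which must be rewritten as $(-1)^{n+1}(b-x)^{n+1}$; combined with the overall minus sign coming from $v$, this produces exactly the alternating coefficient $(-1)^{n}$ attached to $f^{(n)}(b)$ in the $k=n$ summand, so the form of the statement is reproduced without discrepancy. The numerical coefficient behaves automatically, since $\frac{1}{n!}\cdot\frac{1}{n+1}=\frac{1}{(n+1)!}$, and no separate combinatorial identity is needed. With the sign convention verified, the induction closes and the proof is complete.
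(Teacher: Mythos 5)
Your proof is correct. Note, however, that the paper itself gives no proof of this lemma at all: it is imported verbatim from reference \cite{13} (Barnett and Dragomir), so there is no internal argument to compare yours against. Your induction on $n$ with a single integration by parts per step is the standard way such Taylor-type identities are established, and every step checks out: the base case $n=1$ reduces to $\left[(x-t)f(t)\right]_{a}^{b}+\int_{a}^{b}f(t)\,dt$, and in the inductive step the boundary term $-\frac{(x-b)^{n+1}}{n+1}f^{(n)}(b)=\frac{(-1)^{n}(b-x)^{n+1}}{n+1}f^{(n)}(b)$ produces exactly the $k=n$ summand with the correct alternating sign, while $\frac{1}{n!}\cdot\frac{1}{n+1}=\frac{1}{(n+1)!}$ handles the coefficient. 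One small point worth making explicit: to invoke the inductive hypothesis at level $n+1$ you must check that the hypothesis there (absolute continuity of $f^{(n)}$) implies the hypothesis at level $n$ (absolute continuity of $f^{(n-1)}$); this holds because an absolutely continuous $f^{(n)}$ is continuous on $[a,b]$, so $f^{(n-1)}$ is $C^{1}$ and hence absolutely continuous. Also, absolute continuity of $f^{(n)}$ is what licenses the integration by parts against the smooth factor $(x-t)^{n}$. With that remark added, your argument is a complete, self-contained proof of a statement the paper merely cites.
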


\section{inequalities for $n-$times differentiable convex functions}

Shall we start the following result.

\begin{theorem}
\label{teo 2.1} Let $f:[a,b]\rightarrow 
\mathbb{R}
$ be a mapping such that the derivative $f^{(n-1)}$ $(n\geq 1)$ is
absolutely continuous on $[a,b]$. If $\left\vert f^{(n)}\right\vert $ is
convex on $[a,b],$ following inequality holds%
\begin{eqnarray*}
&&\left\vert \int_{a}^{b}f(t)dt-\sum_{k=0}^{n-1}\frac{1}{\left( k+1\right) !}%
\left[ \left( x-a\right) ^{k+1}f^{(k)}(a)+\left( -1\right) ^{k}\left(
b-x\right) ^{k+1}f^{(k)}(b)\right] \right\vert  \\
&\leq &\frac{1}{n!(b-a)}\left\{ \left\vert f^{(n)}(a)\right\vert \left[ 
\frac{(b-x)(x-a)^{n+1}}{n+1}+\frac{(b-x)^{n+2}}{(n+1)(n+2)}+\frac{(x-a)^{n+2}%
}{n+2}\right] \right.  \\
&&+\left. \left\vert f^{(n)}(b)\right\vert \left[ \frac{(b-x)^{n+1}(x-a)}{n+1%
}+\frac{(x-a)^{n+2}}{(n+1)(n+2)}+\frac{(b-x)^{n+2}}{n+2}\right] \right\} 
\end{eqnarray*}%
for all $x\in \lbrack a,b]$.

\begin{proof}
From Lemma \ref{lem 1.1}, property of the modulus and convexity of $%
\left\vert f^{(n)}\right\vert ,$ it is possible to write%
\begin{eqnarray*}
&&\left\vert \int_{a}^{b}f(t)dt-\sum_{k=0}^{n-1}\frac{1}{\left( k+1\right) !}%
\left[ \left( x-a\right) ^{k+1}f^{(k)}(a)+\left( -1\right) ^{k}\left(
b-x\right) ^{k+1}f^{(k)}(b)\right] \right\vert  \\
&\leq &\frac{1}{n!}\int_{a}^{b}\left\vert x-t\right\vert ^{n}\left\vert
f^{(n)}(t)\right\vert dt \\
&=&\frac{1}{n!}\left\{ \int_{a}^{x}\left( x-t\right) ^{n}\left\vert
f^{(n)}(t)\right\vert dt+\int_{x}^{b}\left( t-x\right) ^{n}\left\vert
f^{(n)}(t)\right\vert dt\right\}  \\
&=&\frac{1}{n!}\left\{ \int_{a}^{x}\left( x-t\right) ^{n}\left\vert
f^{(n)}\left( \frac{b-t}{b-a}a+\frac{t-a}{b-a}b\right) \right\vert dt\right. 
\\
&&\left. +\int_{x}^{b}\left( t-x\right) ^{n}\left\vert f^{(n)}\left( \frac{%
b-t}{b-a}a+\frac{t-a}{b-a}b\right) \right\vert dt\right\}  \\
&\leq &\frac{1}{n!}\left\{ \int_{a}^{x}\left( x-t\right) ^{n}\left[ \frac{b-t%
}{b-a}\left\vert f^{(n)}(a)\right\vert +\frac{t-a}{b-a}\left\vert
f^{(n)}(b)\right\vert \right] \right.  \\
&&\left. +\int_{x}^{b}\left( t-x\right) ^{n}\left[ \frac{b-t}{b-a}\left\vert
f^{(n)}(a)\right\vert +\frac{t-a}{b-a}\left\vert f^{(n)}(b)\right\vert %
\right] \right\} .
\end{eqnarray*}%
If we use the equalities above in below, we get the desired result:%
\begin{equation*}
\int_{a}^{x}\left( x-t\right) ^{n}(b-t)dt=\frac{(b-x)(x-a)^{n+1}}{n+1}+\frac{%
(x-a)^{n+2}}{n+2}
\end{equation*}%
\begin{equation*}
\int_{a}^{x}\left( x-t\right) ^{n}\left( t-a\right) dt=\frac{(x-a)^{n+2}}{%
(n+1)(n+2)},
\end{equation*}%
\begin{equation*}
\int_{x}^{b}\left( t-x\right) ^{n}(t-a)dt=\frac{(b-x)^{n+1}(x-a)}{n+1}+\frac{%
(b-x)^{n+2}}{n+2}
\end{equation*}%
and%
\begin{equation*}
\int_{x}^{b}\left( t-x\right) ^{n}\left( b-t\right) dt=\frac{(b-x)^{n+2}}{%
(n+1)(n+2)}.
\end{equation*}
\end{proof}
\end{theorem}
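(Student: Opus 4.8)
The natural plan is to read Lemma \ref{lem 1.1} as an exact identity and observe that its left-hand side, after moving the summation to the other side, is precisely the quantity appearing inside the absolute value in the theorem. Thus
\begin{equation*}
\int_{a}^{b}f(t)dt-\sum_{k=0}^{n-1}\frac{1}{(k+1)!}\left[(x-a)^{k+1}f^{(k)}(a)+(-1)^{k}(b-x)^{k+1}f^{(k)}(b)\right]=\frac{1}{n!}\int_{a}^{b}(x-t)^{n}f^{(n)}(t)dt,
\end{equation*}
and taking absolute values together with the triangle inequality for integrals gives the bound $\frac{1}{n!}\int_{a}^{b}|x-t|^{n}|f^{(n)}(t)|dt$. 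First I would split this integral at the point $x$, using that $|x-t|=x-t$ on $[a,x]$ and $|x-t|=t-x$ on $[x,b]$, so that both factors are nonnegative on each subinterval.

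The conceptual heart of the argument is the convexity step. For every $t\in[a,b]$ one can write $t$ as the convex combination $t=\frac{b-t}{b-a}\,a+\frac{t-a}{b-a}\,b$, with the two coefficients lying in $[0,1]$ and summing to one. Applying the convexity of $|f^{(n)}|$ then yields the pointwise estimate
\begin{equation*}
|f^{(n)}(t)|\leq \frac{b-t}{b-a}|f^{(n)}(a)|+\frac{t-a}{b-a}|f^{(n)}(b)|.
\end{equation*}
Substituting this into each of the two split integrals reduces the problem to evaluating four elementary weighted integrals with polynomial integrands, namely $\int_{a}^{x}(x-t)^{n}(b-t)\,dt$, $\int_{a}^{x}(x-t)^{n}(t-a)\,dt$, $\int_{x}^{b}(t-x)^{n}(t-a)\,dt$, and $\int_{x}^{b}(t-x)^{n}(b-t)\,dt$.

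I expect the main obstacle to be purely computational bookkeeping rather than conceptual: one must evaluate these four integrals and then correctly collect the coefficients of $|f^{(n)}(a)|$ and $|f^{(n)}(b)|$, each of which picks up contributions from both subintervals. The cleanest route is the substitution $u=x-t$ on $[a,x]$ and $u=t-x$ on $[x,b]$, after which each integrand becomes a sum of two monomials $u^{n}$ and $u^{n+1}$ (since $b-t$ and $t-a$ are affine in $u$), giving the factors $\frac{1}{n+1}$ and $\frac{1}{n+2}$ that appear in the statement. Grouping the resulting terms and pulling out the common factor $\frac{1}{n!(b-a)}$ produces exactly the two bracketed expressions multiplying $|f^{(n)}(a)|$ and $|f^{(n)}(b)|$, completing the proof.
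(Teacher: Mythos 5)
Your proposal is correct and follows essentially the same route as the paper's own proof: the identity from Lemma \ref{lem 1.1} plus the triangle inequality, splitting the integral at $x$, the pointwise convexity estimate $|f^{(n)}(t)|\leq \frac{b-t}{b-a}|f^{(n)}(a)|+\frac{t-a}{b-a}|f^{(n)}(b)|$, and the evaluation of the same four polynomial integrals. The only difference is cosmetic --- you evaluate those integrals via the substitutions $u=x-t$ and $u=t-x$, while the paper simply records their values --- and your computed values agree with the paper's.
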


\begin{corollary}
\label{co 1.1} In Theorem \ref{teo 1.1}, if $x=\frac{a+b}{2},$ following
inequality holds:%
\begin{eqnarray*}
&&\left\vert \int_{a}^{b}f(t)dt-\sum_{k=0}^{n-1}\frac{1}{\left( k+1\right) !}%
\left( \frac{b-a}{2}\right) ^{k+1}\left[ f^{(k)}(a)+\left( -1\right)
^{k}f^{(k)}(b)\right] \right\vert  \\
&\leq &\frac{(b-a)^{n+1}}{2^{n+1}(n+1)!}\left[ \left\vert
f^{(n)}(a)\right\vert +\left\vert f^{(n)}(b)\right\vert \right] .
\end{eqnarray*}
\end{corollary}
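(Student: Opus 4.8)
The plan is to specialize Theorem~\ref{teo 2.1} to the midpoint $x=\frac{a+b}{2}$ and then exploit the symmetry this choice creates. The crucial observation is that at $x=\frac{a+b}{2}$ one has $x-a=b-x=\frac{b-a}{2}$, so the two endpoint distances coincide. First I would substitute this common value into the left-hand side of Theorem~\ref{teo 2.1}: each summand $(x-a)^{k+1}f^{(k)}(a)+(-1)^{k}(b-x)^{k+1}f^{(k)}(b)$ factors as $\left(\frac{b-a}{2}\right)^{k+1}\bigl[f^{(k)}(a)+(-1)^{k}f^{(k)}(b)\bigr]$, which is already exactly the left-hand side of the corollary.

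The only real computation is on the right-hand side. Setting $h=\frac{b-a}{2}$ and replacing every occurrence of $x-a$ and $b-x$ by $h$, the bracket multiplying $\left\vert f^{(n)}(a)\right\vert$ becomes $\frac{h^{n+2}}{n+1}+\frac{h^{n+2}}{(n+1)(n+2)}+\frac{h^{n+2}}{n+2}$, and by the symmetry the bracket multiplying $\left\vert f^{(n)}(b)\right\vert$ collapses to exactly the same expression. Hence both factors reduce to $h^{n+2}$ times the scalar $\frac{1}{n+1}+\frac{1}{(n+1)(n+2)}+\frac{1}{n+2}$.

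The heart of the simplification is the elementary identity $\frac{1}{n+1}+\frac{1}{(n+1)(n+2)}+\frac{1}{n+2}=\frac{2}{n+1}$, obtained by placing the three fractions over the common denominator $(n+1)(n+2)$ and observing that the numerator $(n+2)+1+(n+1)$ equals $2(n+2)$. With this, the right-hand side of Theorem~\ref{teo 2.1} becomes $\frac{1}{n!(b-a)}\cdot\frac{2h^{n+2}}{n+1}\bigl[\left\vert f^{(n)}(a)\right\vert+\left\vert f^{(n)}(b)\right\vert\bigr]$.

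Finally I would substitute $h^{n+2}=\frac{(b-a)^{n+2}}{2^{n+2}}$ and cancel: the factor $\frac{2}{2^{n+2}}$ becomes $\frac{1}{2^{n+1}}$, one power of $(b-a)$ cancels against $\frac{1}{b-a}$, and $\frac{1}{(n+1)\,n!}$ combines into $\frac{1}{(n+1)!}$, yielding the bound $\frac{(b-a)^{n+1}}{2^{n+1}(n+1)!}\bigl[\left\vert f^{(n)}(a)\right\vert+\left\vert f^{(n)}(b)\right\vert\bigr]$. There is no genuine obstacle here---the corollary is a direct specialization---so the only care required is the bookkeeping of the powers of $(b-a)$ and the reduction of the factorial.
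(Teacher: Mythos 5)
Your proposal is correct and follows exactly the route the paper intends: the paper states this corollary without proof as a direct specialization of Theorem~\ref{teo 2.1} at $x=\frac{a+b}{2}$, and your substitution, the identity $\frac{1}{n+1}+\frac{1}{(n+1)(n+2)}+\frac{1}{n+2}=\frac{2}{n+1}$, and the power/factorial bookkeeping supply precisely the omitted computation. No gaps.
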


\begin{corollary}
\label{co 1.2} In Theorem \ref{teo 1.1}, if $n=1,$ following inequality
holds:%
\begin{eqnarray*}
&&\left\vert \int_{a}^{b}f(t)dt-\left\{ (x-a)f(a)+(b-x)f(b)\right\}
\right\vert  \\
&\leq &\left\vert f^{\prime }(a)\right\vert \left[ \frac{\left( b-x\right)
^{3}+\left( x-a\right) ^{2}\left( 3b-2a-x\right) }{6(b-a)}\right]  \\
&&+\left\vert f^{\prime }(b)\right\vert \left[ \frac{\left( x-a\right)
^{3}+\left( b-x\right) ^{2}\left( 2b+x-3a\right) }{6(b-a)}\right] .
\end{eqnarray*}
\end{corollary}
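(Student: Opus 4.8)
The plan is to derive this statement as the direct specialization of Theorem \ref{teo 2.1} to the case $n=1$, after which only elementary algebraic simplification remains. First I would set $n=1$ throughout the inequality of Theorem \ref{teo 2.1}. On the left-hand side the summation $\sum_{k=0}^{n-1}$ collapses to the single term $k=0$, which equals $\frac{1}{1!}\left[(x-a)f^{(0)}(a)+(-1)^{0}(b-x)f^{(0)}(b)\right]=(x-a)f(a)+(b-x)f(b)$, so the quantity inside the modulus matches the one claimed in the corollary verbatim.

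Next I would substitute $n=1$ into the right-hand bound. Since $n!=1$, the bound becomes $\frac{1}{b-a}$ times the sum of $\left\vert f'(a)\right\vert$ multiplied by $\frac{(b-x)(x-a)^{2}}{2}+\frac{(b-x)^{3}}{6}+\frac{(x-a)^{3}}{3}$ and $\left\vert f'(b)\right\vert$ multiplied by $\frac{(b-x)^{2}(x-a)}{2}+\frac{(x-a)^{3}}{6}+\frac{(b-x)^{3}}{3}$. The remaining work is to rewrite each of these two three-term expressions in the compact single-fraction form displayed in the corollary.

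For the coefficient of $\left\vert f'(a)\right\vert$ I would place all three fractions over the common denominator $6$, obtaining the numerator $3(b-x)(x-a)^{2}+(b-x)^{3}+2(x-a)^{3}$, and then factor $(x-a)^{2}$ out of the first and third terms: $3(b-x)(x-a)^{2}+2(x-a)^{3}=(x-a)^{2}\left[3(b-x)+2(x-a)\right]=(x-a)^{2}(3b-2a-x)$, which yields the numerator $(b-x)^{3}+(x-a)^{2}(3b-2a-x)$. The coefficient of $\left\vert f'(b)\right\vert$ is treated symmetrically: over the denominator $6$ its numerator is $3(b-x)^{2}(x-a)+(x-a)^{3}+2(b-x)^{3}$, and factoring $(b-x)^{2}$ from the first and third terms gives $(b-x)^{2}\left[3(x-a)+2(b-x)\right]=(b-x)^{2}(2b+x-3a)$, producing the numerator $(x-a)^{3}+(b-x)^{2}(2b+x-3a)$.

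Since the whole bound still carries the factor $\frac{1}{b-a}$, dividing each numerator by $6(b-a)$ reproduces exactly the two bracketed quantities in the corollary, completing the argument. There is no genuine obstacle here; the only care required is in the bookkeeping of the common-denominator step and in the two factorizations, which must be arranged so that the mixed terms regroup cleanly into the stated forms.
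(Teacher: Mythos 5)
Your proposal is correct and matches the paper's (implicit) argument: the paper states Corollary \ref{co 1.2} as an immediate specialization of Theorem \ref{teo 2.1} to $n=1$, which is exactly what you carry out, and your common-denominator and factorization steps $3(b-x)(x-a)^{2}+2(x-a)^{3}=(x-a)^{2}(3b-2a-x)$ and $3(b-x)^{2}(x-a)+2(b-x)^{3}=(b-x)^{2}(2b+x-3a)$ check out. Note only that the corollary's reference to Theorem \ref{teo 1.1} is a typo in the paper for Theorem \ref{teo 2.1}, as you correctly assumed.
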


\begin{remark}
\label{rem 1.1} In Corollary \ref{co 1.2}, if we choose $x=\frac{a+b}{2},$
the inequality reduces to inequality in Theorem \ref{teo 1.1}.
\end{remark}

\begin{theorem}
\label{teo 2.2}  Let $f:[a,b]\rightarrow 
\mathbb{R}
$ be a mapping such that the derivative $f^{(n-1)}$ $(n\geq 1)$ is
absolutely continuous on $[a,b]$. If $\left\vert f^{(n)}\right\vert ^{q}$ is
convex on $[a,b],$ following inequality holds for all $x\in \lbrack a,b]$%
\begin{eqnarray*}
&&\left\vert \int_{a}^{b}f(t)dt-\sum_{k=0}^{n-1}\frac{1}{\left( k+1\right) !}%
\left[ \left( x-a\right) ^{k+1}f^{(k)}(a)+\left( -1\right) ^{k}\left(
b-x\right) ^{k+1}f^{(k)}(b)\right] \right\vert  \\
&\leq &\frac{\left( b-a\right) ^{1/q}}{n!}\left( \frac{\left( x-a\right)
^{np+1}+\left( b-x\right) ^{np+1}}{np+1}\right) ^{1/p}\left( \frac{%
\left\vert f^{(n)}(a)\right\vert ^{q}+\left\vert f^{(n)}(b)\right\vert ^{q}}{%
2}\right) ^{1/q}
\end{eqnarray*}%
where $p>1$ and $\frac{1}{p}+\frac{1}{q}=1.$
\end{theorem}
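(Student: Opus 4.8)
The plan is to open exactly as in Theorem \ref{teo 2.1}: start from Lemma \ref{lem 1.1}, move the sum to the left-hand side, take absolute values, and use the property of the modulus to bound the error by $\frac{1}{n!}\int_a^b |x-t|^n |f^{(n)}(t)|\,dt$. The difference from the previous proof is that now only $|f^{(n)}|^q$ is assumed convex, so I cannot push the convexity estimate directly inside the weighted integral as before; instead I will first separate the weight $|x-t|^n$ from the factor $|f^{(n)}(t)|$ by means of H\"{o}lder's inequality with exponents $p$ and $q=p/(p-1)$.

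Concretely, I would apply H\"{o}lder to the entire integral at once, \emph{without} splitting at $x$ first:
\[
\frac{1}{n!}\int_a^b |x-t|^n |f^{(n)}(t)|\,dt \leq \frac{1}{n!}\left(\int_a^b |x-t|^{np}\,dt\right)^{1/p}\left(\int_a^b |f^{(n)}(t)|^q\,dt\right)^{1/q}.
\]
Keeping the two integrals together is the key organizational choice, because it is precisely what produces the single combined expression that appears in the statement.

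Next I would evaluate the two resulting factors. For the weight integral, splitting at $x$ (now purely as a computation) gives $\int_a^b |x-t|^{np}\,dt = \int_a^x (x-t)^{np}\,dt + \int_x^b (t-x)^{np}\,dt = \frac{(x-a)^{np+1}+(b-x)^{np+1}}{np+1}$. For the second factor I would invoke convexity of $|f^{(n)}|^q$: writing each $t\in[a,b]$ as $t=\frac{b-t}{b-a}a+\frac{t-a}{b-a}b$ yields $|f^{(n)}(t)|^q \leq \frac{b-t}{b-a}|f^{(n)}(a)|^q + \frac{t-a}{b-a}|f^{(n)}(b)|^q$, and integrating over $[a,b]$ together with $\int_a^b \frac{b-t}{b-a}\,dt = \int_a^b \frac{t-a}{b-a}\,dt = \frac{b-a}{2}$ gives $\int_a^b |f^{(n)}(t)|^q\,dt \leq (b-a)\,\frac{|f^{(n)}(a)|^q+|f^{(n)}(b)|^q}{2}$, which is just the right branch of Hermite--Hadamard applied to $|f^{(n)}|^q$. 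Substituting both evaluations back and factoring $(b-a)^{1/q}$ out of the second factor produces exactly the claimed bound.

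I do not expect a genuine obstacle: the argument is routine once the order of operations is fixed. The only real decision point --- and the step most likely to be mishandled --- is to apply H\"{o}lder to the unsplit integral and to bound $\int_a^b |f^{(n)}|^q$ globally by its convexity estimate, rather than splitting into $[a,x]$ and $[x,b]$ before applying H\"{o}lder. Splitting first would instead yield two separate $1/p$-power terms and would not collapse into the single symmetric factor $\left(\frac{(x-a)^{np+1}+(b-x)^{np+1}}{np+1}\right)^{1/p}$ recorded in the statement.
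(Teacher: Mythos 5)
Your proposal is correct and follows essentially the same route as the paper's own proof: Hölder applied to the unsplit integral $\frac{1}{n!}\int_a^b \left\vert x-t\right\vert^n \left\vert f^{(n)}(t)\right\vert\,dt$, the weight integral evaluated by splitting at $x$, and the factor $\int_a^b \left\vert f^{(n)}(t)\right\vert^q\,dt$ bounded by $(b-a)\frac{\left\vert f^{(n)}(a)\right\vert^q+\left\vert f^{(n)}(b)\right\vert^q}{2}$ via convexity of $\left\vert f^{(n)}\right\vert^q$ written in the form $t=\frac{b-t}{b-a}a+\frac{t-a}{b-a}b$. Your remark about the key organizational choice is also on point: the paper likewise keeps the two Hölder factors together rather than splitting before applying Hölder.
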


\begin{proof}
From Lemma \ref{lem 1.1}, property of the modulus, well-known H\"{o}lder
integral inequality and convexity of $\left\vert f^{(n)}\right\vert ^{q},$
we can write%
\begin{eqnarray*}
&&\left\vert \int_{a}^{b}f(t)dt-\sum_{k=0}^{n-1}\frac{1}{\left( k+1\right) !}%
\left[ \left( x-a\right) ^{k+1}f^{(k)}(a)+\left( -1\right) ^{k}\left(
b-x\right) ^{k+1}f^{(k)}(b)\right] \right\vert  \\
&\leq &\frac{1}{n!}\left( \int_{a}^{b}\left\vert x-t\right\vert
^{np}dt\right) ^{1/p}\left( \int_{a}^{b}\left\vert f^{(n)}(t)\right\vert
^{q}dt\right) ^{1/q} \\
&=&\frac{1}{n!}\left( \int_{a}^{x}\left( x-t\right)
^{np}dt+\int_{x}^{b}\left( t-x\right) ^{np}dt\right) ^{1/p}\left(
\int_{a}^{b}\left\vert f^{(n)}\left( \frac{b-t}{b-a}a+\frac{t-a}{b-a}%
b\right) \right\vert ^{q}dt\right) ^{1/q} \\
&\leq &\frac{1}{n!}\left( \frac{\left( x-a\right) ^{np+1}+\left( b-x\right)
^{np+1}}{np+1}\right) ^{1/p}\left( \int_{a}^{b}\left[ \frac{b-t}{b-a}%
\left\vert f^{(n)}(a)\right\vert ^{q}+\frac{t-a}{b-a}\left\vert
f^{(n)}(b)\right\vert ^{q}\right] dt\right) ^{1/q} \\
&=&\frac{\left( b-a\right) ^{1/q}}{n!}\left( \frac{\left( x-a\right)
^{np+1}+\left( b-x\right) ^{np+1}}{np+1}\right) ^{1/p}\left( \frac{%
\left\vert f^{(n)}(a)\right\vert ^{q}+\left\vert f^{(n)}(b)\right\vert ^{q}}{%
2}\right) ^{1/q}.
\end{eqnarray*}%
The proof is completed.
\end{proof}

\begin{remark}
\label{rem 1.2} In Theorem \ref{teo 2.2}, if we choose $n=1$ and $x=\frac{a+b%
}{2}$ we obtain 
\begin{eqnarray*}
&&\left\vert \frac{f(a)+f(b)}{2}-\frac{1}{b-a}\int_{a}^{b}f(t)dt\right\vert 
\\
&\leq &\left( \frac{2}{p+1}\right) ^{1/p}\frac{b-a}{4}\left( \left\vert
f^{\prime }(a)\right\vert ^{q}+\left\vert f^{\prime }(b)\right\vert
^{q}\right) ^{1/q}
\end{eqnarray*}%
which is the inequality in Theorem \ref{teo 1.2}.
\end{remark}

The following result holds for concave functions.

\begin{theorem}
\label{teo 2.3} Let $f:[a,b]\rightarrow 
\mathbb{R}
$ be a mapping such that the derivative $f^{(n-1)}$ $(n\geq 1)$ is
absolutely continuous on $[a,b]$. If $\left\vert f^{(n)}\right\vert ^{q}$ is
concave on $[a,b],$ following inequality holds for all $x\in \lbrack a,b]$%
\begin{eqnarray*}
&&\left\vert \int_{a}^{b}f(t)dt-\sum_{k=0}^{n-1}\frac{1}{\left( k+1\right) !}%
\left[ \left( x-a\right) ^{k+1}f^{(k)}(a)+\left( -1\right) ^{k}\left(
b-x\right) ^{k+1}f^{(k)}(b)\right] \right\vert  \\
&\leq &\frac{\left( b-a\right) ^{1/q}}{n!}\left( \frac{\left( x-a\right)
^{np+1}+\left( b-x\right) ^{np+1}}{np+1}\right) ^{1/p}\left\vert
f^{(n)}\left( \frac{a+b}{2}\right) \right\vert 
\end{eqnarray*}%
where $p>1$ and $\frac{1}{p}+\frac{1}{q}=1.$
\end{theorem}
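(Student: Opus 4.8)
The plan is to mirror the proof of Theorem \ref{teo 2.2} line for line, and to replace only the final estimate on $\int_a^b\left\vert f^{(n)}(t)\right\vert^q\,dt$ by an application of the Jensen integral inequality for concave functions. The starting point is again Lemma \ref{lem 1.1}: it identifies the quantity inside the modulus on the left-hand side with $\frac{1}{n!}\int_a^b(x-t)^n f^{(n)}(t)\,dt$. Passing the modulus inside the integral gives the upper bound $\frac{1}{n!}\int_a^b\left\vert x-t\right\vert^n\left\vert f^{(n)}(t)\right\vert\,dt$, exactly as in the earlier arguments.

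Next I would apply the H\"older integral inequality with exponents $p$ and $q$ (where $\frac1p+\frac1q=1$) to factor this as
\begin{equation*}
\frac{1}{n!}\left( \int_a^b\left\vert x-t\right\vert^{np}\,dt\right)^{1/p}\left( \int_a^b\left\vert f^{(n)}(t)\right\vert^q\,dt\right)^{1/q}.
\end{equation*}
The first factor is identical to the one already computed in the proof of Theorem \ref{teo 2.2}: splitting over $[a,x]$ and $[x,b]$ and evaluating $\int_a^x(x-t)^{np}\,dt$ and $\int_x^b(t-x)^{np}\,dt$ gives $\left( \frac{(x-a)^{np+1}+(b-x)^{np+1}}{np+1}\right)^{1/p}$. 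Thus no new work is needed for this piece.

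The only genuinely new step, and the sole place where concavity is used, is the bound on the second factor. Since $\left\vert f^{(n)}\right\vert^q$ is concave on $[a,b]$, the Jensen integral inequality applied with the identity as the inner argument yields
\begin{equation*}
\frac{1}{b-a}\int_a^b\left\vert f^{(n)}(t)\right\vert^q\,dt\leq \left\vert f^{(n)}\left( \frac{1}{b-a}\int_a^b t\,dt\right)\right\vert^q=\left\vert f^{(n)}\left( \frac{a+b}{2}\right)\right\vert^q,
\end{equation*}
since the mean value of $t$ over $[a,b]$ is the midpoint $\frac{a+b}{2}$. Taking $q$-th roots gives $\left( \int_a^b\left\vert f^{(n)}(t)\right\vert^q\,dt\right)^{1/q}\leq (b-a)^{1/q}\left\vert f^{(n)}\left( \frac{a+b}{2}\right)\right\vert$, and substituting both factors into the H\"older bound produces the claimed inequality.

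The calculations are entirely routine once the structure is in place; the only point demanding care is the correct invocation of Jensen's inequality. One must remember that concavity reverses the direction—giving an \emph{upper} bound on the average of $\left\vert f^{(n)}\right\vert^q$ rather than a lower one—and that the barycenter of the uniform weight on $[a,b]$ is precisely $\frac{a+b}{2}$, which is what delivers the midpoint evaluation $\left\vert f^{(n)}\left( \frac{a+b}{2}\right)\right\vert$ in the final bound.
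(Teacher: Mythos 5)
Your proposal is correct and follows essentially the same route as the paper: Lemma \ref{lem 1.1} plus the modulus bound, H\"older's inequality with exponents $p$ and $q$, and Jensen's inequality for the concave function $\left\vert f^{(n)}\right\vert^{q}$ to reduce the second factor to $(b-a)^{1/q}\left\vert f^{(n)}\left(\frac{a+b}{2}\right)\right\vert$. The only cosmetic difference is that the paper first substitutes $t=\lambda b+(1-\lambda)a$ and applies Jensen on $[0,1]$ with the trivial weight $\lambda^{0}$, whereas you apply Jensen directly on $[a,b]$ with the identity as the inner function; these are the same argument.
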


\begin{proof}
From Lemma \ref{lem 1.1}, property of the modulus and well-known H\"{o}lder
integral inequality, we can write%
\begin{eqnarray*}
&&\left\vert \int_{a}^{b}f(t)dt-\sum_{k=0}^{n-1}\frac{1}{\left( k+1\right) !}%
\left[ \left( x-a\right) ^{k+1}f^{(k)}(a)+\left( -1\right) ^{k}\left(
b-x\right) ^{k+1}f^{(k)}(b)\right] \right\vert  \\
&\leq &\frac{1}{n!}\left( \int_{a}^{b}\left\vert x-t\right\vert
^{np}dt\right) ^{1/p}\left( \int_{a}^{b}\left\vert f^{(n)}(t)\right\vert
^{q}dt\right) ^{1/q}.
\end{eqnarray*}%
Let us write%
\begin{equation*}
\int_{a}^{b}\left\vert f^{(n)}(t)\right\vert ^{q}dt=\left( b-a\right)
\int_{0}^{1}\left\vert f^{(n)}(\lambda b+(1-\lambda )a\right\vert
^{q}d\lambda .
\end{equation*}%
Since $\left\vert f^{(n)}\right\vert ^{q}$ is concave on $[a,b]$, we obtain
the following inequality via Jensen inequality:%
\begin{eqnarray*}
&&\left( b-a\right) \int_{0}^{1}\left\vert f^{(n)}(\lambda b+(1-\lambda
)a\right\vert ^{q}d\lambda  \\
&=&\left( b-a\right) \int_{0}^{1}\lambda ^{0}\left\vert f^{(n)}(\lambda
b+(1-\lambda )a\right\vert ^{q}d\lambda  \\
&\leq &(b-a)\left( \int_{0}^{1}\lambda ^{0}d\lambda \right) \left\vert
f^{(n)}\left( \frac{1}{\int_{0}^{1}\lambda ^{0}d\lambda }\int_{0}^{1}\left(
\lambda b+(1-\lambda )a\right) d\lambda \right) \right\vert ^{q} \\
&=&(b-a)\left\vert f^{(n)}\left( \frac{a+b}{2}\right) \right\vert ^{q}.
\end{eqnarray*}%
If we combine all of results, we get the desired. The proof is completed.
\end{proof}

\end{document}